\newcommand{\Ric}{\operatorname{Ric}}
\newcommand{\hess}{\operatorname{Hess}}
\newtheorem{theo}{Theorem}[section]
\newtheorem{lem}[theo]{Lemma}
\begin{document}

\title{Painlev\'e analysis of the Bryant Soliton} 

\author{Alejandro Betancourt}

\date{ }


\maketitle 
\begin{abstract}
We carry out a Painlev\'e analysis of the systems of differential equations corresponding to the steady and the expanding, rotationally symmetric, gradient Ricci solitons on $\mathbb{R}^n$. For the steady case, dimensions of the form $n=k^2+1$ are singled out, with dimensions 2, 5, and 10 being particularly distinguished. Only dimension 2 is singled out for the expanding soliton.
\end{abstract}

\section{Introduction}
A Riemannian manifold $(M,g)$ is called a Ricci soliton if there exist a vector field $X$ on $M$ and $\varepsilon \in \mathbb{R}$ such that 
\begin{equation}\label{soliton}
 \Ric (g)+\frac{1}{2}\mathcal{L}_{X} g+ \frac{\varepsilon}{2} g=0,
\end{equation}
where $\Ric (g)$ is the Ricci curvature of $g$. The soliton is called shrinking, steady, or expanding depending on whether $\varepsilon$ is negative, zero, or positive. If the underlying vector field $X$ is the gradient of some smooth function $f \in C^\infty (M)$, equation \eqref{soliton} becomes
\begin{equation}\label{gradsoliton}
 \Ric (g)+ \hess (f) +  \frac{\varepsilon}{2} g=0,
\end{equation}
where $\hess (f)$ is the Hessian of $f$ with respect to the metric $g$. In this case we say the soliton is \textit{gradient} and we call $f$ the \textit{soliton potential}. Ricci solitons were introduced by Hamilton in \cite{hamilton1988} and they are natural generalizations of Einstein metrics (for this reason they are also called \textit{quasi-Einstein} metrics in physics literature). The study of Ricci solitons has received special attention in recent years because they naturally correspond to self similar solutions of the Ricci flow equation \cite{hamilton1982} and they often appear as long time solutions of this flow.  Also, Ricci solitons arise as singularity models for the Ricci flow \cite{hamilton1995compactness}.

Hamilton and Witten independently proved\cite{hamilton1982,witten1991} that there exists a (unique up to rescaling) rotationally symmetric, complete, steady gradient Ricci soliton with nontrivial potential on $\mathbb{R}^2$. This soliton is known as the cigar soliton.
In unpublished work, Robert Bryant generalized this construction and showed a similar result for $\mathbb{R}^3$. More generally, Bryant's arguments can be used to produce rotationally symmetric, complete, gradient steady solitons on $\mathbb{R}^n$ for every $n\geq 2$. These solitons are referred to in the literature as Bryant solitons. Following Bryant's ideas, it is also possible to show the existence of a one parameter family of rotationally symmetric, complete, expanding gradient solitons on $\mathbb{R}^n$ for every $n\geq 2$ (see Chapter 1.5 of \cite{chowvol2}).

Although the existence of both the steady and the expanding solitons above has been established for every $n\geq 2$, explicit expressions have been found just for $n=2$. These correspond to the cigar soliton in the steady case and, to the one parameter family of expanding solitons in \cite{gutperle03} (see also Chapter 2.4 of \cite{chowvol1}). By following the procedure described in sections 1.4 and 1.5 of Ref. \cite{chowvol2}, the Ricci soliton equation \eqref{gradsoliton} for both the Bryant and the expanding solitons on $\mathbb{R}^{n+1}$ can be transformed into systems of ODE's. The resulting ODE for the steady case is
\begin{equation} \label{steady}
\begin{aligned}
	\dot{x}&=x^2-xy+n-1 \\
	\dot{y}&=xy-nx^2,
\end{aligned}
\end{equation}
whereas the ODE for the expanding case is.
\begin{equation}\label{expand}
	\begin{aligned} 
	\dot{x}&=x^2-xy+\lambda z^2 +n-1  \\
	\dot{y}&=xy-nx^2+\lambda z^2 \\
	\dot{z}&=xz.
	\end{aligned}
\end{equation}
Here $x$, $y$, and $z$ are functions of a variable $t$ defined on an interval $t\in (-\infty, T)$ with $T\leq \infty$, and $\lambda=\varepsilon/2$ is constant. Our main objective is to carry out a Painlev\'e analysis of these systems to identify the values of $n$ for which these systems of equations may be integrable. We expect to find explicit expressions for the Bryant and the rotationally symmetric, expanding gradient, solitons in the dimensions singled out by this analysis.

Before we go on, we briefly recall that the Painlev\'e test is a procedure, first pioneered by Sofia Kowaleski \cite{sofia}, that helps us determine the integrability of a given system of ODE's. The idea behind this method is that integrability of a system is usually associated to the existence of a large family of meromorphic (or meromorphic is some rational power of the variable) solutions with moveable singularities. The Painlev\'e test identifies such families by carrying out the following steps:
\begin{enumerate}
	\item Assume that there exists a meromorphic solution of the system (taking $t$ to be complex) and determine the\textit{ leading order} terms.
	\item Find a recursion formula for the series solution and compute the \textit{resonances}, that is, the steps of the expansion at which free parameters may enter.
	\item Check the \textit{compatibility conditions} at each resonance to verify that the recursion relation can be solved at every step.
	\item Verify that the series converges in a punctured neighborhood of the singularity.
\end{enumerate}
We say that a system passes the \textit{strong} Painlev\'e test if it is possible to find a meromorphic solution that depends on a maximal number of free parameters. A system passes the \textit{weak} Painlev\'e test if there exists a meromorphic expansion in some rational power of the variable $t$ with a full set of free parameters. Details on the Painlev\'e test can be found in Chapter 8 of Ref. \cite{tabor}.

\section{Analysis of the steady soliton} \label{steadysol}
We begin with the Painlev\'e analysis of system \eqref{steady} corresponding to the Bryant soliton in $\mathbb{R}^{n+1}$.
\subsection{Leading order analysis} \label{first}
Take expansions of the form $x=at^\alpha+\ldots$, and $y=bt^\beta+\ldots$ to compute the leading order terms of the system. Substitution in \eqref{steady} yields the relation
\begin{subequations} \label{coefsteady}
\begin{align} 
	a \alpha t^{\alpha -1} &[=] a^2 t^{2\alpha} -ab t^{\alpha + \beta}\label{leadorunosteady} \\
	b \beta t^{\beta -1} &[=] ab t^{\alpha + \beta} -n a^2 t^{2 \alpha},\label{leadordossteady}
\end{align}
\end{subequations}
where the notation $[=]$ is employed to denote that only the leading order terms on each side are equal (for example, if we had $\alpha<\beta$, then  \eqref{leadorunosteady} says that only $a \alpha t^{\alpha -1}$ and $a^2 t^{2\alpha}$ are equal). To find the leading exponents first assume that $\alpha < \beta$. Equation \eqref{leadorunosteady} implies that $\alpha=-1$ and \eqref{leadordossteady} then yields $\beta=-1$, a contradiction. Conversely, if $\beta < \alpha$ equation \eqref{leadordossteady} readily yields $\alpha =-1$, which in turn implies $\beta= -1$, a contradiction. Therefore, the only admissible leading exponents are $\alpha= \beta=-1$. From this we get that the coefficients must satisfy
\begin{equation} \label{estedi}
 \begin{aligned}
-a=& a^2-ab\\
-b=& ab-na^2.
\end{aligned} 
\end{equation}
This system has two nonzero solutions when $n>1$, namely $a_+=-1/(\sqrt n +1), \,b_+=\sqrt n/(\sqrt{n}+1)$; and $a_-=1/(\sqrt n-1), \, b_-=\sqrt n/(\sqrt{n}-1)$. For $n=1$ the only nonzero solution is $a=-1/2$, $b=1/2$. Both solutions must be analysed independently.
\subsection{Recursion formula and resonances}
We now make the ansatz 
\begin{equation*}
x=\sum_{i=0}^\infty a_i t^{-1+i}, \qquad y=\sum_{i=0}^\infty b_i t^{-1+i},
\end{equation*}
where $a_0, \, b_0$ are chosen to be either $a_+, \, b_+$ or $a_-, \, b_-$ from the previous section. Substitution in \eqref{steady} yields the recursion relation 
\begin{equation} \label{recrel}
\begin{aligned}
\left( \begin{array}{cc}
i-1-2a_0+b_0 & a_0 \\
2na_0-b_0 & i-1-a_0
\end{array} \right)
 \left( \! \! \begin{array}{c}
a_i\\
b_i
\end{array} \! \! \right)=\\
 \left( \begin{array}{c}
 \sum_{k=1}^{i-1} a_k(a_{i-k}-b_{i-k})\\
 \sum_{k=1}^{i-1} a_k(b_{i-k}-na_{i-k})
\end{array} \right)
+ \delta_{2,i} \binom{n-1}{0},
\end{aligned}
\end{equation}
where $\delta_{2,i}$ is a Kronecker delta. Notice that the second term on the right hand side is added to account for the $n-1$ in the first equation of \eqref{steady}. We will denote the $2 \times 2$ matrix on the left hand side by $X(i)$. The coefficients $a_i, \,b_i$ are uniquely determined for every value of $i$ except when $X(i)$ is singular. At these steps we have resonances and thus free parameters may enter the expansion. Solving $\det X(i)=0$ we obtain the following resonances:
\begin{enumerate}
	\item If we take $a_0, \, b_0$ to be $a_+, \, b_+$, then resonances occur at $i=-1$ and $i=2\sqrt{n}/(\sqrt{n}+1)$.
	\item If we take $a_0, \, b_0$ to be $a_-, \, b_-$, we have resonances at $i=-1$ and $i=2\sqrt{n}/(\sqrt{n}-1)$.
\end{enumerate}
The resonance at $i=-1$ corresponds to the fact that the position of the singularity is a free parameter of the system (we have chosen it to be $t=0$ in our ansatz to make computations easier). First we will find the values of $n$ for which system \eqref{steady} passes the strong Painlev\'e test, i.e., we are interested in the cases where the second resonance is an integer.
\begin{lem} \label{integerres}
The second resonance of system \eqref{steady} is an integer if and only if $n=1,\, 4$, or $9$.
\end{lem}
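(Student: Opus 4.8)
The plan is to treat the two resonance formulas recorded in the previous subsection separately and, in each case, determine exactly when the expression is an integer, exploiting crucially that $n$ is a positive integer (a dimension). Writing $s=\sqrt{n}$, the two candidate second resonances are $r_+=2s/(s+1)$ and $r_-=2s/(s-1)$. The first thing I would establish is a rationality constraint: if the second resonance equals an integer $r$, then solving the defining relation for $s$ gives $s=r/(2-r)$ in the $(+)$ case and $s=r/(r-2)$ in the $(-)$ case. In either case $s=\sqrt{n}$ is thereby forced to be rational, and since $n\in\mathbb{Z}_{>0}$, a rational square root must in fact be an integer. Hence $n$ is a perfect square and $s\in\mathbb{Z}_{\ge 1}$.

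Once $s$ is known to be a positive integer, each branch reduces to an elementary divisibility question. For the $(+)$ branch I would rewrite $r_+=2-2/(s+1)$; since $s\ge 1$ gives $0<2/(s+1)\le 1$, integrality forces $2/(s+1)=1$, i.e.\ $s=1$ and $n=1$. For the $(-)$ branch, which requires $s\ge 2$, I would rewrite $r_-=2+2/(s-1)$, so that $r_-\in\mathbb{Z}$ if and only if $(s-1)\mid 2$; this gives $s-1\in\{1,2\}$, that is $s\in\{2,3\}$ and $n\in\{4,9\}$. To close the argument I would verify the converse directly: $n=1$ gives $r_+=1$, $n=4$ gives $r_-=4$, and $n=9$ gives $r_-=3$, all integers. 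I would also note that for $n=1$ only the $(+)$ solution is present, since the $(-)$ coefficients $a_-,b_-$ blow up there, so no case is overlooked. Combining the two branches yields that the second resonance is an integer precisely when $n\in\{1,4,9\}$.

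The step I expect to carry the real content, as opposed to routine algebra, is the rationality argument. A priori the resonance formulas are continuous functions of the real parameter $\sqrt{n}$ and take integer values at infinitely many real values of that parameter, accumulating at $\sqrt{n}=1$; the finiteness of the answer therefore hinges entirely on $n$ being a \emph{positive integer}. The cleanest way to capture this is exactly to invert the resonance relation to solve for $\sqrt{n}$, observe that this forces $\sqrt{n}\in\mathbb{Q}$ hence $\sqrt{n}\in\mathbb{Z}$, and only afterwards invoke the divisibility conditions. With that reduction in hand the remaining case analysis is finite and elementary.
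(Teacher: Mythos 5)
Your proof is correct, but it takes a genuinely different route from the paper's. The paper argues analytically: it studies the real functions $f_+(x)=2x/(x+1)$ and $f_-(x)=2x/(x-1)$, using monotonicity, the limits at infinity (both tend to $2$), and the sample values $f_+(1)=1$, $f_-(2)=4$, $f_-(3)=3$ to conclude that no other integer values can occur. You argue arithmetically: you invert the resonance relation to get $\sqrt{n}=r/(2-r)$ or $\sqrt{n}=r/(r-2)$, conclude that $\sqrt{n}$ is rational and hence an integer, and only then finish with the divisibility conditions $(s+1)\mid 2$ and $(s-1)\mid 2$. The paper's argument is shorter, but your rationality step buys genuine rigor exactly where the paper's treatment of the $(-)$ branch is loose: as you yourself anticipate, $f_-$ takes every integer value $m\geq 5$ at the real points $x=m/(m-2)$, which accumulate at $x=1$ from above, so monotonicity plus finitely many sample values cannot by itself dispose of the region $1<\sqrt{n}<2$, i.e.\ of $n=2$ and $n=3$; there $f_-(\sqrt{2})=4+2\sqrt{2}$ and $f_-(\sqrt{3})=3+\sqrt{3}$ both exceed $4$, so they are not excluded by the paper's bracketing and fail to be integers only because $\sqrt{2}$ and $\sqrt{3}$ are irrational. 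Your reduction to integer $s=\sqrt{n}$ handles precisely this point, which the paper leaves implicit (it effectively evaluates $f_-$ only at integer arguments); it also meshes nicely with the paper's subsequent remark that the second resonance is rational if and only if $n$ is a perfect square.
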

\begin{proof}
The quantity $i=2\sqrt{n}/(\sqrt{n}+1)$ is an integer if an only if $n=1$. To see this consider the function $f(x)=2x/(x+1)$. This function takes only positive values when $x\geq 0$ and is strictly increasing on this region. Furthermore, $f(1)=1$, and $f(x) \rightarrow 2$ from below as $x \rightarrow	\infty$, so it can not take any other integer values.

On the other hand, the quantity $i=2\sqrt{n}/(\sqrt{n}-1)$ is an integer if and only if $n=4$ or $n=9$. Indeed, consider the function $f(x)=2x/(x-1)$. It takes positive values if $x>1$ and is strictly decreasing in this region. Furthermore, $f(2)=4$, $f(3)=3$, and $f(x)\rightarrow 2$ asymptotically from above, so it does not take any other integer values.
\end{proof}
The following table summarizes the values that the second resonance takes depending on the value of $n$ and on our choice of initial $a_0,\,b_0$.
\begin{table}[h!]
\caption{Resonances for $n=1,\, 4$, and $9$.}
\centering
  \begin{tabular}{cccccccc}
    \hline
		$n \quad$ & $a_+$ & $b_+$ & $i$ & $\quad$ & $a_-\quad$ & $b_-\quad$ & $i\quad$ \\ \hline
		1$\quad$ & $-1/2 \quad$ & $1/2 \quad$ & 1 & $\quad$ & $\quad$ & $\quad$ & $\quad$ \\ 
		4$\quad$  & $-1/3\quad$ & $2/3 \quad$ & $4/3$ & $\quad$ & $1\quad$ & $2\quad$ & $4\quad$  \\
		9$\quad$  & $-1/4\quad$ & $3/4 \quad$ & $3/2$ & $\quad$ & $1/2\quad$  & $3/2\quad $ & $3\quad$ \\ \hline
  \end{tabular}
\end{table}

We are also interested in the weak Painlev\'e test, so we have to consider the cases where $i$ takes rational values as well. In this case it is obvious that the second resonance is a rational number if and only $n$ is a perfect square.

\subsection{Compatibility conditions} 
\subsubsection{Compatibility conditions for meromorphic expansions}
We proceed to verify the compatibility conditions for the three cases with integer resonances (in these cases our original ansatz corresponds to the series expansion of a meromorphic function). We start with the case $n=1$. In this case the recursion relation \eqref{recrel} becomes
\begin{equation*}
	\left( \begin{array}{cc}
	i+1/2 & -1/2 \\
	-3/2 & i-1/2
	\end{array} \right)
	\left( \! \! \begin{array}{c}
	a_i\\
	b_i
	\end{array} \! \!  \right)=\\
	\left( \begin{array}{c}
 	\sum_{k=1}^{i-1} a_k(a_{i-k}-b_{i-k})\\
 	\sum_{k=1}^{i-1} a_k(b_{i-k}-a_{i-k})
	\end{array} \right).
\end{equation*}
Thus, for $i=1$ we have
\begin{equation*}
	\frac{1}{2}\left( \begin{array}{rr}
	3 & -1 \\[0.3em]
	-3 & 1
	\end{array} \right)
	\left( \! \! \begin{array}{c}
	a_1\\
	b_1
	\end{array} \! \!  \right)=\\
	\left( \! \! \begin{array}{c}
 	0 \\
 	0
	\end{array} \! \! \right).
\end{equation*}
This equation has infinitely many solutions, so free parameters enter the expansion. Compatibility conditions are satisfied trivially.

For the case $n=4$ we have $a_0=1$, $b_0=2$ and hence the recursion relation is given by
\begin{equation*} \label{res4}
	\left( \begin{array}{cc}
	i-1 & 1 \\
	6 & i-2
	\end{array} \right)
	\left( \! \! \begin{array}{c}
	a_i\\
	b_i
	\end{array} \! \!  \right)=\\
	\left( \begin{array}{c}
 	\sum_{k=1}^{i-1} a_k(a_{i-k}-b_{i-k})\\
 	\sum_{k=1}^{i-1} a_k(b_{i-k}-4a_{i-k})
	\end{array} \right)+\delta_{2,i}\binom{3}{0}.
\end{equation*}
For the resonance $i=4$ we obtain the equation
\begin{equation*}
	\left( \begin{array}{cc}
	3 & 1 \\
	6 & 2
	\end{array} \right)
	\left( \! \! \begin{array}{c}
	a_4\\
	b_4
	\end{array} \! \!  \right)=\\
	\left( \! \! \begin{array}{c}
 	0 \\
 	0
	\end{array} \! \! \right),
\end{equation*}
which shows that compatibility conditions also hold in this case.

Finally, for the case $n=9$ we have $a_0=1/2$ and $b_0= 3/2$. The recursion relation is 
\begin{equation*} 
	\frac{1}{2}\left( \begin{array}{cc}
	2i-1 & 1 \\
	15 & 2i-3
	\end{array} \right)
	\left( \! \! \begin{array}{c}
	a_i\\
	b_i
	\end{array} \! \!  \right)=\\
	\left( \begin{array}{c}
 	\sum_{k=1}^{i-1} a_k(a_{i-k}-b_{i-k})\\
 	\sum_{k=1}^{i-1} a_k(b_{i-k}-9a_{i-k})
	\end{array} \right)+\delta_{2,i}\binom{8}{0}.
\end{equation*}
At the resonance $i=3$ we have 
\begin{equation*}
	\frac{1}{2}\left( \begin{array}{cc}
	5 & 1 \\
	15 & 3
	\end{array} \right)
	\left( \! \! \begin{array}{c}
	a_3\\
	b_3
	\end{array} \! \!  \right)=\\
	\left( \! \! \begin{array}{c}
 	0 \\
 	0
	\end{array} \! \! \right),
\end{equation*}
and hence free parameters enter the expansion. The majorisation argument from Section 6 of Ref. \cite{dancer2001jgp} further guarantees that the series obtained by these recursion relations converge in some punctured neighborhood of the singularity. This implies the following theorem.
\begin{theo}
System \eqref{steady}, corresponding to the Bryant soliton on $\mathbb{R}^{n+1}$, passes the strong Painlev\'e test if and only if $n=1,\,4$, or $9$.
\end{theo}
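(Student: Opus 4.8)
The plan is to read ``passes the strong Painlev\'e test'' as the requirement that the meromorphic series ansatz carry the maximal number of free parameters, namely two, since \eqref{steady} is a first-order system in the two unknowns $x,y$ and its general solution depends on two constants of integration. One of these parameters is always present: it is the location of the movable singularity, accounted for by the resonance at $i=-1$. Hence the test passes precisely when the \emph{second} resonance produces a genuine free parameter, and this requires two things: that the resonance be a nonnegative integer (so that it falls on an actual step of the expansion), and that the compatibility condition at that step hold (so that the recursion \eqref{recrel} can be solved with one of $a_i,b_i$ left free, rather than forcing the introduction of logarithmic terms).

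For the forward implication I would argue as follows. If the test passes, then some leading-order branch yields a meromorphic series with a full parameter set, so that branch's second resonance must be a nonnegative integer; by Lemma \ref{integerres} the only dimensions for which $2\sqrt{n}/(\sqrt{n}+1)$ or $2\sqrt{n}/(\sqrt{n}-1)$ is an integer are $n=1$, $4$, or $9$. Consulting the resonance table, the relevant values ($i=1$ for $n=1$, $i=4$ for $n=4$ on the $a_-,b_-$ branch, and $i=3$ for $n=9$ on the $a_-,b_-$ branch) are all positive integers, so each corresponds to a legitimate step of the expansion. This disposes of necessity, modulo the compatibility check.

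For the reverse implication I would treat each of the three dimensions separately, which is exactly what the explicit recursion matrices above accomplish. In each case one substitutes the appropriate $a_0,b_0$ into \eqref{recrel}, evolves the recursion up to the step at which $X(i)$ becomes singular, and checks that the right-hand side lies in the range of $X(i)$; concretely, the compatibility condition is that the inhomogeneous vector be annihilated by a left null covector of the rank-one matrix $X(i)$. In all three cases the right-hand side evaluates to $\binom{0}{0}$, so the condition holds trivially and one of the two coefficients at the resonance is free, supplying the second parameter. Once compatibility is established, the majorisation argument of \cite{dancer2001jgp} guarantees convergence of the resulting series in a punctured neighbourhood of the singularity, completing the verification.

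The step I expect to be the main obstacle is the compatibility verification for $n=4$, where the resonance sits at $i=4$: this requires solving the recursion for $a_1,b_1,a_2,b_2,a_3,b_3$ (the step $i=2$ carrying the inhomogeneous $\binom{3}{0}$ contribution) before the right-hand side at $i=4$ can be shown to vanish, whereas the cases $n=1$ (empty sum) and $n=9$ (only two prior steps) are comparatively immediate. The conceptual point to keep in mind is that a right-hand side failing the range condition would not be a mere computational nuisance but would signal a movable logarithmic branch point and the failure of the test; thus the vanishing of these inhomogeneous terms is the genuine content of the sufficiency direction.
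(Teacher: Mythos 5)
Your proposal is correct and follows essentially the same route as the paper: Lemma \ref{integerres} gives necessity of $n=1,4,9$, and sufficiency is the case-by-case compatibility check at the resonance (where the right-hand side of \eqref{recrel} indeed vanishes in all three cases, with the $n=4$ case requiring the intermediate coefficients $a_1=b_1=a_3=b_3=0$, $a_2=0$ as you anticipate), followed by the majorisation argument of \cite{dancer2001jgp} for convergence. Your range/left-null-covector formulation of compatibility is just a more explicit phrasing of the same condition the paper verifies directly.
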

\subsubsection{Compatibility conditions for expansions with branch points}
We now turn out attention to the case where $n>1$ is a perfect square. For this case we make an ansatz of the form
\begin{equation*} 
 x=\sum_{i=0}^\infty a_i t^{-1+i/Q} \quad \text{ and }\quad y=\sum_{i=0}^\infty b_i t^{-1+i/Q},
\end{equation*}
where $Q:=\sqrt{n}+1$ if we choose $a_+,\,b_+$ as leading coefficients, or $Q:=\sqrt{n}-1$ if we choose $a_-,\,b_-$ (this corresponds to a series expansion of a function with a branch point). For this ansatz equation \eqref{steady} yields the recursion relation
\begin{equation} \label{recrelrat}
\begin{aligned}
\left( \begin{array}{cc}
i/Q-1-2a_0+b_0 & a_0 \\
2na_0-b_0  & i/Q-1-a_0
\end{array} \right)
\left( \! \! \begin{array}{c}
a_i\\
b_i
\end{array} \! \! \right)=\\
\left( \begin{array}{c}
 \sum_{k=1}^{i-1} a_k(a_{i-k}-b_{i-k})\\
 \sum_{k=1}^{i-1} a_k(b_{i-k}-na_{i-k})
\end{array} \right)+\delta_{2Q,i}\binom{n-1}{0}.
\end{aligned}
\end{equation}
We will keep denoting the matrix on the left hand side by $X(i)$. Our analysis from the previous section shows that we have a resonance when $i=2\sqrt{n}$ regardless of the choice of leading coefficients. To check that compatibility conditions at the resonance hold it is enough to show that the right hand side of \eqref{recrelrat} vanishes. 
\begin{lem}
For every perfect square $n>1$, the right hand side of \eqref{recrelrat} vanishes at step $i=2\sqrt{n}$. 
\end{lem}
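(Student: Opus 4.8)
The plan is to reduce the lemma to the vanishing of the two convolution sums and then to track exactly which low-order coefficients can be nonzero. First I would dispose of the inhomogeneous term: the Kronecker delta $\delta_{2Q,i}$ is inactive at the resonance, since $2Q=2\sqrt{n}+2\neq 2\sqrt{n}$ when $Q=\sqrt{n}+1$ and $2Q=2\sqrt{n}-2\neq 2\sqrt{n}$ when $Q=\sqrt{n}-1$. Hence it suffices to show that both entries $\sum_{k=1}^{2\sqrt{n}-1}a_k(a_{i-k}-b_{i-k})$ and $\sum_{k=1}^{2\sqrt{n}-1}a_k(b_{i-k}-na_{i-k})$ vanish at $i=2\sqrt{n}$. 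The same determinant computation as in Section~2.2 (with $i$ replaced by $i/Q$) shows that $X(i)$ in \eqref{recrelrat} is singular only at $i=-Q$ and $i=2\sqrt{n}$, so $X(i)$ is invertible for every $1\le i<2\sqrt{n}$; this is what lets me solve the recursion uniquely below the resonance.

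The organizing observation is that the \emph{only} mechanism producing a nonzero coefficient beyond $a_0,b_0$ is the inhomogeneity, which enters at step $i=2Q$. In the case $Q=\sqrt{n}+1$ one has $2Q=2\sqrt{n}+2>2\sqrt{n}$, so I would argue by induction on $k$ that $a_k=b_k=0$ for all $1\le k<2\sqrt{n}$: at each such step the right-hand side of \eqref{recrelrat} is a sum of products of strictly-lower-index coefficients (all zero by induction) with no delta contribution, and $X(k)$ is invertible. Consequently every factor appearing in the convolution at $i=2\sqrt{n}$ is zero, and both sums vanish outright.

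The case $Q=\sqrt{n}-1$ is the genuine branch point only for $n>4$, i.e. $n\ge 9$, so that $2Q\ge 4$; for $n=4$ one has $Q=1$, which is the meromorphic ansatz already handled by the integer-resonance analysis. Here the same induction gives $a_k=b_k=0$ for $1\le k<2Q$, and at $i=2Q$ the delta fires, producing the first possibly-nonzero block $a_{2Q},b_{2Q}$. The only integer strictly between $2Q$ and $2\sqrt{n}$ is $2Q+1=2\sqrt{n}-1$; at that step the single surviving convolution term pairs $a_{2Q}$ with the index-$1$ coefficients $a_1=b_1=0$, forcing $a_{2Q+1}=b_{2Q+1}=0$. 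At the resonance $i=2\sqrt{n}$ the only $k$ with $a_k$ possibly nonzero is therefore $k=2Q$, and it is convolved against index $i-k=2\sqrt{n}-2Q=2$; since $2<2Q$ we have $a_2=b_2=0$, so that term, and hence each sum, vanishes. In short, every term in both sums carries either a vanishing outer factor $a_k$ or a vanishing inner factor.

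The main obstacle — really the only subtlety — is this last case, where the constant $n-1$ enters \emph{below} the resonance and seeds a nonzero block. The crux is the arithmetic identity $2\sqrt{n}-2Q=2$ together with $2Q\ge 4$: the seeded block at $2Q$ is always paired at the resonance with the index-$2$ data, which is still in the vanishing regime. I would present the induction first and then isolate this single pairing; no estimate is required, only careful bookkeeping of which indices carry nonzero coefficients.
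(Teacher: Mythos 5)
Your proof is correct and takes essentially the same approach as the paper's: the same induction showing all coefficients vanish below the first active step, followed by tracking the single seeded block $a_{2Q},\,b_{2Q}$ in the $Q=\sqrt{n}-1$ case and observing that at the resonance it only pairs with the index-$2$ coefficients. The one divergence is $n=4$, where your crux inequality $2<2Q$ fails: you legitimately defer that case to the earlier meromorphic ($Q=1$) analysis, whereas the paper absorbs it into the same computation (there the vanishing rests on $a_{2Q}=a_2=0$, which the paper leaves as "easily seen").
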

\begin{proof}
We proceed by cases. First assume that we choose $a_+, \,b_+$ as the leading coefficients (recall that $Q=\sqrt{n}+1$ in this case). The right hand side of \eqref{recrelrat} vanishes at $i=0$. An inductive argument further shows that if $a_k=b_k=0$ for all $0<k<i$ for a fixed $i<2\sqrt n \, $, then $a_{i}=b_{i}=0$. Indeed, note that in this cases $X(i)$ is nonsingular and the right hand side of \eqref{recrelrat} vanishes. This shows that $a_i=b_i=0$ for all $i<2\sqrt{n}$. Therefore, at the resonance $i=2\sqrt{n}$ all the terms in the sum in the right hand side of \eqref{recrelrat} vanish and also the Kronecker delta is zero, since $2\sqrt{n}<2Q$. Hence, the right hand side of \eqref{recrelrat} vanishes at the resonance as claimed.

If we choose $a_-, \,b_-$ as the leading order coefficients the argument is essentially the same, although we have to be more careful because this time $Q=\sqrt{n}-1$ and we might get some nonzero coefficients before the resonance. The inductive argument above shows that if $i<2Q$, then $a_i=b_i=0$. At step $i=2Q$ the matrix $X(i)$ is nonsingular and the right hand side of \eqref{recrelrat} is not zero, so at least one of the coefficients $a_{2Q},\, b_{2Q}$ is not zero. The next step of the recursion is
\begin{equation*}
X \left( 2\sqrt n-1\right) \left( \! \! \begin{array}{c}
a_{2\sqrt n-1}\\
b_{2\sqrt n-1}
\end{array} \! \! \right)=\binom{a_1(a_{2Q}-b_{2Q})+a_{2Q}(a_1-b_1)}{a_1(b_{2Q}-na_{2Q})+a_{2Q}(b_1-na_1)}=\binom{0}{0},
\end{equation*}
so even if we get nonzero coefficients at step $i=2Q$, the coefficients at the following step are both zero. At step $i=2\sqrt n$ we have
\begin{equation*}
X \left( 2\sqrt n\right) \left( \! \! \begin{array}{c}
a_{2\sqrt n}\\
b_{2\sqrt n}
\end{array} \! \! \right)=\binom{a_2(a_{2Q}-b_{2Q})+a_{2Q}(a_2-b_2)}{a_2(b_{2Q}-na_{2Q})+a_{2Q}(b_2-na_2)}.
\end{equation*}
From this we can easily see that the right hand side is again zero, which proves the claim.
\end{proof}
Using the majorisation argument from Section 6 of Ref. \cite{dancer2001jgp} again to prove convergence of our series, we obtain the following result.
\begin{theo}
System \eqref{steady}, corresponding to the Bryant soliton on $\mathbb{R}^{n+1}$, passes the weak Painlev\'e test for every perfect square $n$.
\end{theo}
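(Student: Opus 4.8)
The plan is to assemble the results already established into a verification of the two defining requirements of the weak Painlevé test for a perfect square $n$: a solution expandable in a rational power of $t$ carrying a full set of free parameters, together with convergence of that expansion. I would first dispose of $n = 1$, which the preceding Lemma excludes. Since a meromorphic Laurent series is the $Q = 1$ special case of an expansion in $t^{1/Q}$, passing the strong test implies passing the weak test, so the earlier theorem already covers $n = 1$ (as well as $n = 4, 9$). It then remains to treat an arbitrary perfect square $n > 1$ by means of the branch-point ansatz $x = \sum a_i t^{-1+i/Q}$, $y = \sum b_i t^{-1+i/Q}$ with $Q = \sqrt{n} + 1$ or $Q = \sqrt{n} - 1$, a series in the rational power $t^{1/Q}$ representing a function with a branch point, which is exactly the object the weak test seeks.

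Next I would count the free parameters. The recursion \eqref{recrelrat} determines $a_i, b_i$ uniquely except at the resonances, which the resonance analysis places at $i = -1$ and $i = 2\sqrt{n}$ for either choice of leading coefficients. The resonance at $i = -1$ encodes the freedom in the location of the singularity, normalized to $t = 0$, and contributes the first parameter. At $i = 2\sqrt{n}$ the matrix $X(2\sqrt{n})$ is singular by definition of a resonance, and the preceding Lemma shows that the right-hand side of \eqref{recrelrat} vanishes there; consequently the homogeneous system $X(2\sqrt{n}) (a_{2\sqrt{n}}, b_{2\sqrt{n}})^{\top} = 0$ admits a one-parameter family of solutions, so the compatibility condition holds and a second free parameter enters. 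Two parameters is the maximal number for a two-dimensional first-order system, so the expansion carries a full set.

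The only genuinely analytic step remaining is convergence, and I expect it to be the main obstacle, in the sense that it cannot be settled by the algebraic bookkeeping above. I would invoke the majorisation argument of Section 6 of \cite{dancer2001jgp}, which dominates the coefficients $a_i, b_i$ by those of a convergent majorant and thereby yields a positive radius of convergence in $t^{1/Q}$; since the recursion \eqref{recrelrat} has exactly the quadratic structure treated there, the argument transfers directly. With convergence in hand the branch-point series is a genuine solution carrying a full set of free parameters, and the weak Painlevé test is passed for every perfect square $n$.
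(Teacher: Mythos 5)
Your proposal is correct and follows essentially the same route as the paper: the compatibility Lemma guarantees the right-hand side of \eqref{recrelrat} vanishes at the resonance $i=2\sqrt{n}$, so a free parameter enters there alongside the one from the singularity position, and convergence is supplied by the majorisation argument of Section 6 of \cite{dancer2001jgp}. Your explicit treatment of $n=1$ via strong-implies-weak is a small point the paper leaves implicit, but it does not constitute a different approach.
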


\section{Analysis of the expanding soliton}
We now turn our attention to system \eqref{expand} corresponding to the rotationally symmetric, expanding soliton on $\mathbb{R}^{n+1}$.
\subsection{Leading order exponents}
We take $x=at^\alpha+\ldots$, $y=bt^\beta+\ldots$, and $z=ct^\gamma+\ldots$ to compute the leading order coefficients. Substitution in \eqref{expand} yields the relations
\begin{subequations}\label{coefexpand}
\begin{align}
	a \alpha t^{\alpha -1} &[=] a^2 t^{2\alpha} -ab t^{\alpha + \beta} +\lambda c^2 t^{2 \gamma}\label{leaduno}\\
	b \beta t^{\beta -1} &[=] ab t^{\alpha + \beta} -n a^2 t^{2 \alpha}+\lambda c^2 t^{2 \gamma}\label{leadudos}\\
	c \gamma t^{\gamma -1} &[=] ac t^{\alpha + \gamma}.\label{leadtres}
\end{align}
\end{subequations}
From \eqref{leadtres} we immediately deduce that $\alpha=-1$. We must also have that both $\beta \geq -1$ and $\gamma \geq -1$. To see this first suppose that exactly one of the two exponents is less than $-1$. It is easy to see that \eqref{leaduno} leads to a contradiction. If both $\beta<-1$ and $\gamma<-1$ we must have that $\beta-1=2\gamma$ and $ab=\lambda c^2$, otherwise \eqref{leaduno} could not hold. Equation \eqref{leadudos} would then imply that $\beta =2a$. On the other hand, \eqref{leadtres} implies $\gamma=a$, which means $2\gamma=\beta$, a contradiction. Thus, $\beta \geq -1$ and $\gamma \geq -1$. Once that we know this, we can use \eqref{leadudos} to see that actually $\beta=-1$, otherwise the terms $-n a^2 t^{2 \alpha}$ and $\lambda c^2 t^{2 \gamma}$ would have to cancel out, which leads to a contradiction. We cannot find a value for $\gamma$ by only looking at the exponents, so we distinguish two cases, both of which must be analysed: $\alpha = \beta = \gamma =-1$, and $\alpha = \beta =-1$, $\gamma> -1$. We present the rest of the analysis for each of the two cases separately.
\subsection{Expansion with equal leading exponents} \label{equals}
We first study the case in which all exponents are $-1$. Equation \eqref{coefexpand} implies that the leading coefficients are related by 
	\begin{align*}
		-a=& a^2-ab+\lambda c^2\\
		-b=& ab-na^2+\lambda c^2\\
		-c=&ac.
	\end{align*} 
The only nontrivial solutions of this system are $a=-1$, $b=-n$, and $c=\pm \sqrt{n/\lambda}$. For the moment we will only consider the positive value of $c$ (the rest of the procedure is completely analogous for the negative value). We now make the ansatz
\begin{equation*}
 x=\sum_{i=0}^\infty a_i t^{-1+i},\quad y=\sum_{i=0}^\infty b_i t^{-1+i}, \textnormal{ and} \quad z=\sum_{i=0}^\infty c_i t^{-1+i},
\end{equation*}
where $a_0=a$, $b_0=b$, and $c_0=c$ are the coefficients just calculated. Substituting in equation \eqref{expand} we get the recursion relation
\begin{equation*}
\begin{aligned}
		\left( \begin{array}{ccc}
			i-1-2a_0+b_0 & a_0 & -2\lambda c_0\\
			2na_0-b_0 & i-1-a_0 & -2\lambda c_0\\
			-c_0 & 0 &i-1-a_0
		\end{array} \right) \left( \begin{array}{c}
			a_i\\
			b_i\\
			c_i		
		\end{array}\right)=\\
		\left( \begin{array}{c}
 \sum_{k=1}^{i-1} a_k(a_{i-k}-b_{i-k})+\lambda c_kc_{i-k}\\
 \sum_{k=1}^{i-1} a_k(b_{i-k}-na_{i-k})+\lambda c_kc_{i-k}\\
 \sum_{k=1}^{i-1}a_k c_{i-k} \end{array} \right)+ \delta_{2,i}
 \left( \begin{array}{c}
 n-1\\
 0\\
 0 \end{array}\right).
\end{aligned}
\end{equation*}
Just as in the steady case, we call the $3\times 3$ matrix on the left $X(i)$. The polynomial $\det X(i)$ has roots $i=-1$, $i=(n-\sqrt{n^2+8n})/2$, and $i=(n+\sqrt{n^2+8n})/2$. From this we see that resonances will be rational if and only if $n^2+8n$ is a perfect square.
\begin{lem}
Let $n$ be a positive integer. Then $n^2+8n$ is a perfect square if and only if $n=1$.
\end{lem}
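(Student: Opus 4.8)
The plan is to reduce this Diophantine statement to a finite check by trapping $n^2+8n$ strictly between two consecutive perfect squares. First I would complete the square, writing $n^2+8n=(n+4)^2-16$, so that for every $n\geq 1$ the quantity lies strictly below $(n+4)^2$. I would then compare it from below against $(n+3)^2=n^2+6n+9$: the difference $(n^2+8n)-(n+3)^2=2n-9$ is positive precisely when $n\geq 5$. Hence for all $n\geq 5$ we obtain the strict sandwich $(n+3)^2<n^2+8n<(n+4)^2$, and a positive integer strictly between two consecutive squares cannot itself be a square, which eliminates every $n\geq 5$ at once.

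It then remains only to inspect the finitely many cases $n=1,2,3,4$ directly: $n^2+8n$ equals $9,\,20,\,33,\,48$ respectively, and only $9=3^2$ is a perfect square. This pins down $n=1$ as the unique positive solution and completes the proof.

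A cleaner alternative, which avoids the small-case split, is to factor. Setting $n^2+8n=m^2$ and completing the square gives $(n+4)^2-m^2=16$, hence $(n+4-m)(n+4+m)=16$. Since the two factors sum to $2(n+4)$ they share the same parity, and as their product $16$ is even both must be even; enumerating the factorizations of $16$ into even pairs and solving the resulting linear equations for $n+4$ yields $n=1$ as the only positive solution. The main obstacle here is purely bookkeeping rather than any genuine difficulty: in the squeeze argument one must check that the sandwich is strict and treat the small cases by hand, while in the factoring argument one must restrict carefully to even factor pairs (so as not to admit the spurious $1\cdot 16$ splitting) and discard the solutions violating positivity, such as $n=0$ and the negative branches, retaining only positive integers $n$.
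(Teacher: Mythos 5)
Your proof is correct, and your primary argument takes a genuinely different route from the paper's. The paper solves $n^2+8n=m^2$ for $n$ via the quadratic formula, observes that $16+m^2$ must then be a perfect square, and concludes that $(m,4,\sqrt{16+m^2})$ is a Pythagorean triple with leg $4$, hence $(0,4,4)$ or $(3,4,5)$; back-substituting gives $n=1$ as the only positive solution. Your squeeze argument instead traps $n^2+8n$ strictly between the consecutive squares $(n+3)^2$ and $(n+4)^2$ for all $n\geq 5$, leaving a four-case check. This is more elementary (no Diophantine factoring or triple classification needed) and scales well as a technique, at the modest cost of the hand check for $n\leq 4$. Your alternative factoring argument, $(n+4-m)(n+4+m)=16$ with both factors even, is essentially the paper's proof with its one implicit step made explicit: the paper's assertion that the only Pythagorean triples with leg $4$ are $(0,4,4)$ and $(3,4,5)$ is precisely what your enumeration of even factor pairs justifies, so in that respect your version is the more self-contained of the two.
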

\begin{proof}
Suppose $n^2+8n=m^2$ is a perfect square (we take $m$ to be nonnegative). Solving for $n$ in this expression we get $n=-4\pm \sqrt{16+m^2}$.
Since $n$ is an integer by hypothesis, $16+m^2$ must be a perfect square. This means that $(m,4,\sqrt{16+m^2})$ must be a Pythagorean triple, and therefore it has to be either $(0,4,4)$ or $(3,4,5)$. Hence, we must have that $m=0$ or $m=3$. If $n^2+8n=0$ then $n=-8$ or $n=0$. On the other hand $n^2+8n=9$ implies $n=-9$ or $n=1$.
\end{proof}
This result implies that resonances are rational only in the case $n=1$. In this case $\det X(i)$ has roots $i=-1$ and $i=2$ (the former with multiplicity 2). The leading coefficients are $a_0=-1$, $b_0=-1$, and $c_0=1/\sqrt \lambda$ and so we have the recursion relation
\begin{equation*}
		\left( \! \begin{array}{ccc}
			i & -1 & -2 \sqrt \lambda \\
			-1 & i & -2 \sqrt \lambda\\
			-1/\sqrt \lambda \, & 0 & i
		\end{array} \! \right) \! \! \left( \begin{array}{c}
			a_i\\
			b_i\\
			c_i		
		\end{array}\right) \! = \!
\left( \begin{array}{c}
 \sum_{k=1}^{i-1} a_k(a_{i-k}-b_{i-k})+\lambda c_kc_{i-k}\\
 \sum_{k=1}^{i-1} a_k(b_{i-k}-na_{i-k})+\lambda c_kc_{i-k}\\
 \sum_{k=1}^{i-1}a_k c_{i-k} \end{array} \right).
\end{equation*}
From this we see that $a_1=b_1=c_1=0$ and therefore the right hand side vanishes at the resonance $i=2$, so compatibility conditions are satisfied. Nonetheless, given that $\dim \ker X(2)=1$, our series expansion only has two free parameters (one being the position of the singularity and the other one entering at the resonance $i=2$) and is therefore not the general solution of \eqref{expand}. 

As we stated before, if instead we choose $c=-\sqrt{n/\lambda}$, the situation is essentially the same. In particular, $\det X(i)$ is the same polynomial regardless of the sign of $c$, which means that we will have the same resonances as before. Compatibility conditions also hold at the top resonance $i=2$, but not enough free parameters enter the expansion.

\subsection{Expansions with different leading coefficients}
Next we analyse the case in which the leading exponents are $\alpha=\beta=-1$, and $\gamma>-1$ is yet to be determined. In this case the leading coefficients are related by 
	\begin{align*}
		-a=& a^2-ab\\
		-b=& ab-na^2\\
		c\gamma =&ac.
	\end{align*} 
From this we immediately see that $\gamma = a$ and that $c\neq 0$ can be chosen arbitrarily (this means we have already found a free parameter). Also, $a$ and $b$ must be nonzero solutions of equation \eqref{estedi} corresponding to the steady soliton, so we will have to further consider two subcases. Notice that $a$ must be rational because apart from being a leading coefficient it is also a leading exponent, so we will require $n$ to be a perfect square.

We will consider series solutions of \eqref{expand} of the form
\begin{equation*}
  x=\sum_{i=0}^\infty a_i t^{-1+i/Q},\quad y=\sum_{i=0}^\infty b_i t^{-1+i/Q}, \textnormal{ and} \quad z=\sum_{i=0}^\infty c_i t^{a+i/Q}.
\end{equation*}
Here $a_0,\, b_0$ are taken to be either $a_+, \,b_+$, or $a_-, \,b_-$ from Section \ref{first}, and $c\neq 0$ is arbitrary. Just as before, in the first case we set $Q:=\sqrt{n}+1$, and in the second $Q:=\sqrt{n}-1$. To compute the recursion relation corresponding to these series, notice that in both cases $a_0=-1+b_0$. This means that the series expansion for $z$ is $\sqrt n$ steps ahead of the expansions for $x$ and $y$. Indeed, $a_i$ and $b_i$ are the coefficients of $t^{-1+i/Q}$ in $x$ and $y$, and the coefficient for the same power of $t$ in $z$ is $c_{i-\sqrt n}$. Thus, the recursion relation is given by
\begin{equation} \label{recrelexpdos}
	\begin{aligned}
		\left( \begin{array}{ccc}
			i/Q-1-2a_0+b_0 & a_0 & 0\\
			2na_0-b_0 & i/Q-1-a_0 & 0\\
			-c_0 & 0 &i/Q
		\end{array} \right) \left( \begin{array}{c}
			a_i\\
			b_i\\
			c_i		
		\end{array}\right)=\\
		\left( \begin{array}{c}
 \sum_{k=1}^{i-1} a_k(a_{i-k}-b_{i-k})+\lambda c_{k-\sqrt n}\,c_{i-k-\sqrt n}\\
 \sum_{k=1}^{i-1} a_k(b_{i-k}-na_{i-k})+\lambda c_{k-\sqrt n}\, c_{i-k-\sqrt n}\\
 \sum_{k=1}^{i-1}a_k c_{i-k} \end{array} \right)+ \delta_{2Q,i}
 \left( \begin{array}{c}
 n-1\\
 0\\
 0 \end{array}\right).
	\end{aligned}
\end{equation}
As before, we call the $3 \times 3$ matrix on the left $X(i)$. If we set $\nu:=i/Q$, we have that 
\begin{equation*}
\det X(i)=\nu (\nu-1)\left(\nu-\frac{2 \sqrt{n}}{Q}\right),
\end{equation*}
so we have resonances at $i=-Q$, $i=0$, and $i=2\sqrt n$. The first resonance corresponds to the arbitrariness in the position of the singularity and the second to the arbitrariness of $c_0$. To pass the weak Painlev\'e test we will need another free parameter entering the expansion at the third resonance. Unfortunately this is not the case, as the next result shows.
\begin{lem}
The right hand side of \eqref{recrelexpdos} is not in the image of $X(i)$ at the resonance $i=2\sqrt{n}$. In particular, compatibility conditions at the top resonance do not hold.
\end{lem}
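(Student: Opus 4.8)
The plan is to reduce the claim to a single non-vanishing pairing. Since $\det X(2\sqrt n)=0$, the vector on the right of \eqref{recrelexpdos} lies in the image of $X(2\sqrt n)$ if and only if it is annihilated by a generator of the (one-dimensional) left null space of $X(2\sqrt n)$. So I would first evaluate the right-hand side of \eqref{recrelexpdos} at $i=2\sqrt n$ explicitly, then exhibit the cokernel generator, and finally pair the two and check that the result is nonzero.

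To evaluate the right-hand side I first determine which coefficients are nonzero below the resonance. The key structural point is that in \eqref{recrelexpdos} the third column of $X(i)$ has zero entries in its first two rows (precisely the effect of $z$ being $\sqrt n$ steps ahead), so the equations for $a_i,b_i$ decouple from $c_i$; moreover the coupling terms $\lambda c_{k-\sqrt n}c_{i-k-\sqrt n}$ require both indices to be nonnegative and hence vanish identically for $i<2\sqrt n$. Consequently, for $i<2\sqrt n$ the pair $(a_i,b_i)$ obeys exactly the steady recursion \eqref{recrelrat}, so by the steady analysis the only possibly nonzero coefficients below the resonance sit at $i=0$ and, in the $-$ branch, at $i=2Q=2\sqrt n-2$; the $c_i$ then follow from the third equation and vanish except at those same indices. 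Feeding this into the right-hand side at $i=2\sqrt n$, the steady-type sums in the first two components are precisely the sums shown to vanish at the top resonance in the steady case (the Kronecker delta does not fire, since $2Q\neq 2\sqrt n$), the third component $\sum_k a_kc_{2\sqrt n-k}$ vanishes after matching the single surviving index against the nonzero $c$'s, and the only surviving term is the diagonal coupling $k=\sqrt n$, contributing $\lambda c_0^2$ to each of the first two components. Thus the right-hand side equals $(\lambda c_0^2,\ \lambda c_0^2,\ 0)^{\mathsf T}$.

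For the cokernel generator I would exploit the block form of $X(2\sqrt n)$: its third column is $(0,0,\nu)^{\mathsf T}$ with $\nu=2\sqrt n/Q\neq 0$, which forces the third entry of any left null vector to vanish and reduces the problem to the left null vector of the singular upper-left $2\times 2$ block. A direct computation gives the generator $w=(\nu-1-a_0,\,-a_0,\,0)$. Pairing with the right-hand side yields $w\cdot(\lambda c_0^2,\lambda c_0^2,0)^{\mathsf T}=\lambda c_0^2\,(\nu-1-2a_0)$, and a one-line substitution of $\nu=2\sqrt n/Q$ together with $a_0=\mp 1/Q$ shows that $\nu-1-2a_0=1$ in both branches. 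Since $\lambda\neq 0$ and $c_0\neq 0$, this pairing equals $\lambda c_0^2\neq 0$, so the right-hand side is not in the image of $X(2\sqrt n)$ and compatibility fails.

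The step I expect to require the most care is the bookkeeping of nonzero coefficients in the $-$ branch, where $a_{2Q},b_{2Q},c_{2Q}$ are genuinely nonzero before the resonance and must be shown not to leak into the right-hand side at $i=2\sqrt n$. The surviving cross terms, $a_{2Q}(a_2-b_2)$ in the first component and $a_{2Q}c_2$ in the third, involve coefficients $a_2,b_2,c_2$ that vanish for $\sqrt n\geq 3$, while for $\sqrt n=2$ one checks separately that $a_{2Q}$ itself vanishes. The cleanest route is to invoke the steady-case vanishing at the top resonance for the first two components rather than recomputing those sums, leaving only the genuinely new third-component sum to be dispatched directly.
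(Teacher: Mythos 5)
Your proposal is correct and follows essentially the same route as the paper: both arguments show by induction that all coefficients below the resonance vanish except, in the $a_-,b_-$ branch, at $i=2Q$ (with the $\sqrt n=2$ case saved by $a_{2Q}=0$), conclude that the right-hand side at $i=2\sqrt n$ equals $(\lambda c_0^2,\ \lambda c_0^2,\ 0)^{\mathsf T}$, and then check that this vector is not in the image of the singular matrix. The only differences are cosmetic: you finish with a single left-null-vector pairing valid in both branches (your generator $w$ and the identity $\nu-1-2a_0=1$ both check out), whereas the paper writes out the explicit matrix in each branch and reads off non-membership by inspection, and you delegate part of the coefficient bookkeeping to the steady-case lemma instead of repeating the induction.
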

\begin{proof}
We proceed by cases. First assume that $a_0,\,b_0$ are taken to be $a_+, \,b_+$ from Section \ref{first}. In this case $Q=\sqrt n+1$. Using an inductive argument as for the steady case, we see that if $a_k=b_k=c_k=0$ for all $0<k<i$ for a fixed $i<2\sqrt{n}$, then $a_i=b_i=c_i=0$. Indeed, for such $i$ the matrix $X(i)$ is nonsingular and the right hand side of \eqref{recrelexpdos} vanishes. Thus, $a_i=b_i=c_i=0$ for all $i<2\sqrt{n}$. When $i=2\sqrt{n}$, the right hand side of \eqref{recrelexpdos} is nonzero. The recursion relation at this step is
\begin{equation*}
 \frac{1}{Q}\left( \begin{array}{ccc}
			1+2\sqrt{n} & -1 & 0\\
			-2n-\sqrt{n} & \sqrt{n} & 0\\
			-c_0Q & 0 &2\sqrt{n}
		\end{array} \right) \left( \begin{array}{c}
			a_{2\sqrt{n}}\\
			b_{2\sqrt{n}}\\
			c_{2\sqrt{n}}		
		\end{array}\right)=\\
		\left( \begin{array}{c}
 \lambda c_0^2\\
  \lambda c_0^2\\
  0 \end{array} \right).
\end{equation*}
Clearly the right hand side is not in the image of $X(2\sqrt{n})$.

Conversely, if we set $a_0,\,b_0$ to be $a_-, \,b_-$, then we have that $Q=\sqrt n-1$ and therefore we get some nonzero coefficients before the resonance. Arguing as before, if $a_k=b_k=c_k=0$ for all $0<k<i$ for a fixed $i<2Q$, then $a_i=b_i=c_i=0$, so all the coefficients before step $i=2Q$ vanish. At $i=2Q$ we have the relation
\begin{equation*}
 X(2Q)\left( \begin{array}{c}
			a_{2Q}\\
			b_{2Q}\\
			c_{2Q}		
		\end{array}\right)=\left( \begin{array}{c}
			n-1\\
			0\\
			0		
		\end{array}\right),
\end{equation*}
so at least one of the coefficients at this step is not zero. Computing the next step of the recursion we see that the right hand side of \eqref{recrelexpdos} is zero, since the only possible nonzero coefficients are $c_0$, $a_{2Q}$, $b_{2Q}$, $c_{2Q}$, and all of these appear multiplied by zero. Thus, all coefficients at step $i=2\sqrt n-1$ are zero. At the resonance $i=2\sqrt n$ we obtain the relation
\begin{equation*}
 \frac{1}{Q}\left( \begin{array}{ccc}
			-1+2\sqrt{n} & 1 & 0\\
			2n-\sqrt{n} & \sqrt{n} & 0\\
			-c_0Q & 0 &2\sqrt{n}
		\end{array} \right) \left( \begin{array}{c}
			a_{2\sqrt{n}}\\
			b_{2\sqrt{n}}\\
			c_{2\sqrt{n}}		
		\end{array}\right)=\\
		\left( \begin{array}{c}
 \lambda c_0^2\\
  \lambda c_0^2\\
  0 \end{array} \right).
\end{equation*}
Again, the vector on the right hand side is not in the image of $X(2\sqrt{n})$.
\end{proof}
This lemma, together with the results from Section \ref{equals} imply the following theorem.
\begin{theo}
System \eqref{expand}, corresponding to the rotationally symmetric, expanding soliton in $\mathbb{R}^{n+1}$, does not pass the weak (and therefore the strong) Painlev\'e test for any value of $n$.
\end{theo}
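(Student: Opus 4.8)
The plan is to assemble the theorem from the two mutually exclusive and exhaustive branches of leading-order behavior identified at the start of this section, showing that each fails the weak Painlev\'e test for every $n$. The first point I would stress is that the leading-order analysis is genuinely \emph{exhaustive}: one necessarily has $\alpha=-1$ and $\beta=-1$, while $\gamma\geq -1$, so the only possibilities are $\gamma=-1$ (equal exponents) and $\gamma>-1$ (distinct exponents). Since \eqref{expand} is a first-order autonomous system in the three unknowns $x,y,z$, its general solution carries three arbitrary constants; consequently, passing the weak Painlev\'e test requires exhibiting a (possibly fractional-power) series expansion about a movable singularity that contains three free parameters. Nailing down this count, together with the exhaustiveness of the two cases, is what lets me reduce the theorem to the two results already in hand.

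For the equal-exponent branch I would invoke Section \ref{equals}. There the resonances are $i=-1$, $i=(n-\sqrt{n^2+8n})/2$, and $i=(n+\sqrt{n^2+8n})/2$, and by the lemma of that section these are rational only when $n=1$. For every other $n$ the two nontrivial resonances are irrational, so no expansion in rational powers with a full set of free parameters can be formed, and the branch is disqualified immediately. When $n=1$ the resonances degenerate to $i=-1$ (double) and $i=2$; here the compatibility condition does hold, but since $\dim\ker X(2)=1$ only two free parameters enter, namely the position of the singularity and the coefficient at $i=2$. Two parameters fall short of the required three, so even this lone arithmetically favorable case fails to reproduce the general solution.

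For the distinct-exponent branch I would appeal directly to the preceding lemma, which shows that at the top resonance $i=2\sqrt{n}$ the right-hand side of \eqref{recrelexpdos} lies outside the image of $X(2\sqrt{n})$, for both choices $a_+,b_+$ and $a_-,b_-$ of leading coefficients. A failed compatibility condition at a resonance means the recursion cannot be solved within the assumed series ansatz: logarithmic terms are forced into the expansion, which is exactly the signature of failing the Painlev\'e test. Thus the would-be third free parameter never enters, and this branch is ruled out for every perfect square $n$, and hence, together with the irrationality argument above, for every $n$.

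Combining the two branches yields the theorem: no admissible leading behavior produces a fractional-power series with three free parameters, so \eqref{expand} fails the weak Painlev\'e test for all $n$, and a fortiori the strong test. I expect the only genuinely delicate point to be the bookkeeping of the parameter count, in particular confirming that the double resonance at $i=-1$ in the $n=1$ subcase contributes a single free parameter rather than two, so that the total is honestly two and not three. The remainder is a clean case split resting entirely on results already established.
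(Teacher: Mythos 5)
Your proposal is correct and follows essentially the same route as the paper: the theorem is obtained by combining the equal-exponent case (Section \ref{equals}, where rationality of the resonances forces $n=1$ and even then only two of the required three free parameters enter) with the distinct-exponent case (the lemma showing the compatibility condition fails at $i=2\sqrt{n}$ for both choices of leading coefficients). Your added emphasis on the exhaustiveness of the two leading-order branches and on the explicit three-parameter count is exactly the bookkeeping the paper leaves implicit, and your flagged concern about the double resonance at $i=-1$ yielding only one parameter matches the paper's own accounting.
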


\section{Concluding remarks}
We can summarize the results of the Painlev\'e analysis as follows: for the Bryant soliton we found that the cases $n=1,\, 4$ and $9$ pass the strong  Painlev\'e test, i.e. the general solution of the system is meromorphic with a moveable singularity. This means that the system is likely to be integrable in these cases. As we mentioned in the introduction, explicit solutions are known only for the case $n=1$. Our results suggest that we can also find similar expressions for the cases $n=4$ and $n=9$. We also found that the  Bryant soliton passes the weak Painlev\'e test for every perfect square $n$, that is, the general solution is meromorphic in some rational power of $t$. This suggests that it should be possible to give solutions with movable branch points explicitly. The explicit integration of the Bryant soliton equation is further discussed in \cite{bdw14}, under a new change of variables.

On the other hand, the expanding soliton does not pass the weak Painlev\'e test for any value of $n$. In the case $n=1$, however, compatibility conditions at the resonances are satisfied, which suggests that there is an integrable subsystem. This is consistent with the one existence of the one parameter family mentioned in the introduction. For $n>1$ though, the Painlev\'e test suggests that there are no explicit solutions of \eqref{expand}.

\section*{Acknowledgements}
The author wishes to acknowledge the Mexican Council of Science and Technology (CONACyT) and the Mathematical Institute of the University of Oxford for their support.

\end{document}